\date{\today}
\newtheorem{theorem}{Theorem}
\newtheorem{corollary}{Corollary}
\newtheorem{lemma}{Lemma}
\theoremstyle{definition}
\newtheorem{example}{Example}
\begin{document}

\title{On the group of automorphisms of the Brandt $\lambda^0$-extension of a monoid with zero}

\author{Oleg~Gutik}
\address{Faculty of Mathematics, National University of Lviv,
Universytetska 1, Lviv, 79000, Ukraine}
\email{o\underline{\hskip5pt}\,gutik@franko.lviv.ua,
ovgutik@yahoo.com}

\keywords{Semigroup, group of automorphisms, monoid, extension}

\subjclass[2010]{20M15, 20F29 .}

\maketitle              

\begin{abstract}
The group of automorphisms of the Brandt $\lambda^0$-extension $B^0_\lambda(S)$ of an arbitrary monoid $S$ with zero is described.
In particular we show that the group of automorphisms $\mathbf{Aut}(B_{\lambda}^0(S))$ of $B_{\lambda}^0(S)$ is isomorphic to a homomorphic image of the group defines on the Cartesian product $\mathscr{S}_{\lambda}\times \mathbf{Aut}(S)\times H_1^{\lambda}$ with the following binary operation:
\begin{equation*}
    [\varphi,h,u]\cdot[\varphi^{\prime},h^{\prime},u^{\prime}]= [\varphi\varphi^{\prime},hh^{\prime},\varphi u^{\prime}\cdot uh^{\prime}],
\end{equation*}
where $\mathscr{S}_{\lambda}$ is the group of all bijections of the cardinal $\lambda$, $\mathbf{Aut}(S)$ is the group of all automorphisms of the semigroup $S$ and $H_1^{\lambda}$ is the direct $\lambda$-power of the group of units $H_1$ of the monoid $S$.
\end{abstract}

\section{Introduction and preliminaries}

Further we shall follow the terminology of \cite{Clifford-Preston-1961-1967, Petrich-1984}.

Given a semigroup $S$, we shall
denote the set of idempotents of $S$ by $E(S)$.
A
semigroup $S$ with the adjoined unit
(identity) [zero] will be denoted by
$S^1$ [$S^0$] (cf.
\cite{Clifford-Preston-1961-1967}).
Next, we shall denote the unit
(identity) and the zero of a semigroup $S$ by $1_S$ and $0_S$,
respectively. Given a subset $A$ of a semigroup $S$, we shall denote
by $A^*=A\setminus\{ 0_S\}$.

If $S$ is a semigroup, then we shall denote the subset of idempotents in $S$ by $E(S)$. If $E(S)$ is closed under multiplication in $S$ and we shall refer to $E(S)$ a \emph{band} (or the \emph{band of} $S$). If the band $E(S)$ is a non-empty subset of $S$, then the semigroup operation on $S$ determines the following partial order $\leqslant$ on $E(S)$: $e\leqslant f$ if and only if $ef=fe=e$. This order is called the {\em natural partial order} on $E(S)$.

If $h\colon S\rightarrow T$ is a homomorphism (or a map) from a semigroup $S$ into a semigroup $T$ and if $s\in S$, then we denote the image of $s$ under $h$ by $(s)h$.

Let $S$ be a semigroup with zero and $\lambda$ a cardinal $\geqslant 1$. We define the semigroup operation on the set $B_{\lambda}(S)=(\lambda\times S\times {\lambda})\cup\{ 0\}$ as follows:
\begin{equation*}
 (\alpha,a,\beta)\cdot(\gamma, b, \delta)=
  \begin{cases}
    (\alpha, ab, \delta), & \text{ if } \beta=\gamma; \\
    0, & \text{ if } \beta\ne \gamma,
  \end{cases}
\end{equation*}
and $(\alpha, a, \beta)\cdot 0=0\cdot(\alpha, a, \beta)=0\cdot
0=0,$ for all $\alpha, \beta, \gamma, \delta\in {\lambda}$ and $a,
b\in S$. If $S=S^1$ then the semigroup $B_\lambda(S)$ is called
the {\it Brandt $\lambda$-extension of the semigroup}
$S$~\cite{Gutik-1999}. Obviously, if $S$ has zero
then ${\mathcal J}=\{ 0\}\cup\{(\alpha, 0_S, \beta)\colon 0_S$ is
the zero of $S\}$ is an ideal of $B_\lambda(S)$. We put
$B^0_\lambda(S)=B_\lambda(S)/{\mathcal J}$ and the semigroup
$B^0_\lambda(S)$ is called the {\it Brandt $\lambda^0$-extension
of the semigroup $S$ with zero}~\cite{Gutik-Pavlyk-2006}.

If $\mathcal{I}$ is a trivial semigroup (i.e. $\mathcal{I}$ contains only one element), then we denote the semigroup $\mathcal{I}$ with the adjoined zero by ${\mathcal{I}}^0$. Obviously, for any $\lambda\geqslant 2$, the Brandt $\lambda^0$-extension of the semigroup ${\mathcal{I}}^0$ is isomorphic to the semigroup of $\lambda{\times}\lambda$-matrix units and any Brandt $\lambda^0$-extension of a semigroup with zero which also contains a non-zero idempotent contains the semigroup of $\lambda{\times}\lambda$-matrix units.
We shall denote  the semigroup of $\lambda{\times}\lambda$-matrix units by $B_\lambda$. The $2\times 2$-matrix semigroup with adjoined identity $B_2^1$ plays an impotent role in Graph Theory and its called the \emph{Perkins semigroup}. In the paper \cite{Perkins-1969} Perkins showed that the semigroup $B_2^1$ is not finitely based. More details on the word problem of the Perkins semigroup via different graphs may be found in the works of Kitaev and his coauthors (see \cite{Kitaev-Lozin=2015, Kitaev-Seif=2008}).

We always consider the Brandt
$\lambda^0$-extension only of a monoid with zero. Obviously, for
any monoid $S$ with zero we have $B^0_1(S)=S$. Note that every
Brandt $\lambda$-extension of a group $G$ is isomorphic to the
Brandt $\lambda^0$-extension of the group $G^0$ with adjoined
zero. The Brandt $\lambda^0$-extension of the group with adjoined
zero is called a \emph{Brandt semigroup}~\cite{Clifford-Preston-1961-1967, Petrich-1984}. A semigroup $S$ is a Brandt semigroup if and only if $S$ is a
completely $0$-simple inverse semigroup~\cite{Clifford-1942,
Munn-1957} (cf.  also \cite[Theorem~II.3.5]{Petrich-1984}).
We shall say that the Brandt
$\lambda^0$-extension $B_\lambda^0(S)$ of a semigroup $S$ is
\emph{finite} if the cardinal $\lambda$ is finite.

In the paper \cite{Gutik-Repovs-2010} Gutik and Repov\v{s} established homomorphisms of the Brandt $\lambda^0$-extensions of monoids with zeros. They also described a category whose objects are ingredients in the constructions of the Brandt $\lambda^0$-extensions of monoids with zeros. Here they introduced finite, compact topological Brandt $\lambda^0$-extensions of topological semigroups and countably compact topological Brandt $\lambda^0$-extensions of topological inverse semigroups in the class of topological inverse semigroups, and established the structure of such extensions and non-trivial continuous homomorphisms between such topological Brandt $\lambda^0$-extensions of topological  monoids with zero. There they also described a category whose objects are ingredients in the constructions of finite (compact,
countably compact) topological Brandt $\lambda^0$-extensions of topological  monoids with zeros. These  investigations were continued in \cite{Gutik-Pavlyk-Reiter-2009} and \cite{Gutik-Pavlyk-2013a}, where established countably compact topological Brandt $\lambda^0$-extensions of topological monoids with zeros and pseudocompact topological Brandt $\lambda^0$-extensions of semitopological monoids with zeros their corresponding categories. Some other topological aspects of topologizations, embeddings and completions of the semigroup of $\lambda{\times}\lambda$-matrix units and Brandt $\lambda^0$-extensions as semitopological and topological semigroups were studied in \cite{Bardyla-Gutik-2016, Gutik-2014, Gutik-Pavlyk-2005, Gutik-Pavlyk-Reiter-2011, Gutik-Ravsky-2015, Gutik-Ravsky-2015a, Jamalzadeh-Rezaei-2010, Jamalzadeh-Rezaei-2013}.

In this paper we describe the group of automorphisms of the Brandt $\lambda^0$-extension $B^0_\lambda(S)$ of an arbitrary monoid $S$ with zero.

\section{Automorphisms of the Brandt $\lambda^0$-extension of a monoid with zero}

We observe that if $f\colon S\to S$ is an automorphism of the semigroup $S$ without zero then it is obvious that the map $\widehat{f}\colon S^0\to S^0$ defined by the formula
\begin{equation*}
    (s)\widehat{f}=
\left\{
  \begin{array}{cl}
    (s)f, & \hbox{if~} s\neq 0_S;\\
    0_S, & \hbox{if~} s=0_S,
  \end{array}
\right.
\end{equation*}
is an automorphism of the semigroup $S^0$ with adjoined zero $0_S$. Also the automorphism $f\colon S\to S$ of the semigroup $S$ can be extended to an automorphism $f_B\colon B_\lambda^0(S)\to B_\lambda^0(S)$ of the Brandt $\lambda^0$-extension $B_\lambda^0(S)$ of the semigroup $S$ by the formulae:
\begin{equation*}
    (\alpha,s,\beta)f_B=(\alpha,(s)f,\beta), \quad \hbox{~for all~~} \alpha,\beta\in\lambda
\end{equation*}
and $(0)f_B=0$. We remark that so determined extended automorphism is not unique.

The following theorem describes all automorphisms of the Brandt $\lambda^0$-extension $B_\lambda^0(S)$ of a monoid $S$.

\begin{theorem}\label{theorem-3.1}
Let $\lambda\geqslant 1$ be cardinal and let $B_{\lambda}^0(S)$ be the Brandt $\lambda^0$-extension of monoid $S$ with zero. Let $h\colon S\rightarrow S$ be an automorphism and suppose that $\varphi\colon \lambda\rightarrow \lambda$ is a bijective map. Let $H_1$ be the group of units of $S$ and $u\colon \lambda\rightarrow H_1$ a map. Then the map $\sigma\colon B_{\lambda}^0(S)\rightarrow
B_{\lambda}^0(S)$ defined by the formulae
\begin{equation}\label{eq-3.1}
  ((\alpha,s,\beta))\sigma=  ((\alpha)\varphi,(\alpha)u\cdot(s)h\cdot((\beta)u)^{-1},(\beta)\varphi) \qquad  \hbox{~and~} \qquad (0)\sigma=0,
\end{equation}
is an automorphism of the semigroup $B_{\lambda}^0(S)$. Moreover, every automorphism of $B_{\lambda}^0(S)$ can be constructed in this manner.
\end{theorem}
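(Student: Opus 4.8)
The plan is to prove the two assertions separately: first that the prescribed map $\sigma$ is always an automorphism, then that every automorphism arises in this way. For the constructive direction I would verify the three defining properties directly. Well-definedness: since $u$ takes values in the group of units $H_1$, the element $(\alpha)u\cdot(s)h\cdot((\beta)u)^{-1}$ lies in $S$, and it equals $0_S$ precisely when $(s)h=0_S$, i.e. when $s=0_S$ (an automorphism $h$ fixes $0_S$ and maps $S^\ast$ onto $S^\ast$); multiplying a nonzero element by units on either side keeps it nonzero, so $\sigma$ respects the identification $(\alpha,0_S,\beta)=0$ and sends nonzero elements to nonzero elements. The homomorphism property I would check on the two cases of the product $(\alpha,s,\beta)(\gamma,t,\delta)$: when $\beta=\gamma$ the cancellation $((\beta)u)^{-1}\cdot(\gamma)u=1_S$ together with $(s)h\cdot(t)h=(st)h$ produces the required middle coordinate, and when $\beta\neq\gamma$ the injectivity of $\varphi$ forces the images to multiply to $0$. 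Bijectivity follows because the triple built from $\varphi^{-1}$ and $h^{-1}$ yields a two-sided inverse of the same shape. This direction is entirely routine.

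For the converse, let $\sigma$ be an arbitrary automorphism; I assume $1_S\neq 0_S$, since otherwise $B_\lambda^0(S)$ is trivial and there is nothing to prove. The first step is to recover $\varphi$ from the idempotent structure. The nonzero idempotents of $B_\lambda^0(S)$ are exactly the elements $(\alpha,e,\alpha)$ with $e\in E(S)^\ast$, and $(\alpha,e,\alpha)\leqslant(\alpha,f,\alpha)$ holds iff $e\leqslant f$ in $E(S)$, while idempotents at different positions are orthogonal. As $1_S$ is the greatest element of $E(S)$, the maximal nonzero idempotents are precisely $e_\alpha:=(\alpha,1_S,\alpha)$ for $\alpha\in\lambda$. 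Since $\sigma$ preserves the relation $ef=fe=e$, it is an order automorphism of $E(B_\lambda^0(S))$ and therefore permutes the maximal idempotents; this defines a bijection $\varphi\colon\lambda\to\lambda$ by $(e_\alpha)\sigma=e_{(\alpha)\varphi}$.

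The second step is to read off $h$ and $u$ from the action of $\sigma$ on the ``local monoids'' and on the matrix-unit skeleton. Because $\sigma(e_\alpha)=e_{(\alpha)\varphi}$, the automorphism $\sigma$ maps $e_\alpha B_\lambda^0(S)e_\alpha=\{(\alpha,s,\alpha):s\in S\}$ isomorphically onto $e_{(\alpha)\varphi}B_\lambda^0(S)e_{(\alpha)\varphi}$; identifying both local monoids with $S$ yields an automorphism $h_\alpha$ of $S$ with $((\alpha,s,\alpha))\sigma=((\alpha)\varphi,(s)h_\alpha,(\alpha)\varphi)$. Applying $\sigma$ to $e_\alpha(\alpha,1_S,\beta)e_\beta=(\alpha,1_S,\beta)$ and to $(\alpha,1_S,\beta)(\beta,1_S,\alpha)=e_\alpha$ gives $((\alpha,1_S,\beta))\sigma=((\alpha)\varphi,c_{\alpha\beta},(\beta)\varphi)$ with $c_{\alpha\beta}\in H_1$ (its inverse being $c_{\beta\alpha}$), together with the ``cocycle'' identity $c_{\alpha\beta}c_{\beta\gamma}=c_{\alpha\gamma}$ arising from $(\alpha,1_S,\beta)(\beta,1_S,\gamma)=(\alpha,1_S,\gamma)$.

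Finally I would assemble the general formula. The decomposition $(\alpha,s,\beta)=(\alpha,s,\alpha)(\alpha,1_S,\beta)$ gives $((\alpha,s,\beta))\sigma=((\alpha)\varphi,(s)h_\alpha\cdot c_{\alpha\beta},(\beta)\varphi)$, while the alternative decomposition $(\alpha,s,\beta)=(\alpha,1_S,\beta)(\beta,s,\beta)$ forces the compatibility $(s)h_\alpha\cdot c_{\alpha\beta}=c_{\alpha\beta}\cdot(s)h_\beta$, so that $(s)h_\beta=(c_{\alpha\beta})^{-1}\,(s)h_\alpha\,c_{\alpha\beta}$. Fixing a reference index $\gamma_0\in\lambda$ and setting $h:=h_{\gamma_0}$ and $(\alpha)u:=c_{\alpha\gamma_0}$, the conjugation relation gives $(s)h_\alpha=(\alpha)u\cdot(s)h\cdot((\alpha)u)^{-1}$, and then the cocycle identity makes the factors telescope, namely $((\alpha)u)^{-1}c_{\alpha\beta}=c_{\gamma_0\alpha}c_{\alpha\beta}=c_{\gamma_0\beta}=((\beta)u)^{-1}$, whence $(s)h_\alpha\cdot c_{\alpha\beta}=(\alpha)u\cdot(s)h\cdot((\beta)u)^{-1}$, which is exactly~\eqref{eq-3.1}. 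I expect the main obstacle to be precisely this last bookkeeping: showing that the locally defined automorphisms $h_\alpha$ all coincide up to inner automorphisms by units, and that the inner discrepancy is globally coherent via the cocycle condition, so that a single pair $(h,u)$ reproduces $\sigma$.
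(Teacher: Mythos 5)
Your proof is correct and follows essentially the same strategy as the paper's: recover the bijection $\varphi$ from the images of the maximal idempotents $(\alpha,1_S,\alpha)$, recover the units from the images of the elements $(\alpha,1_S,\beta)$ and $(\beta,1_S,\alpha)$, recover $h$ from the local monoid at a reference index, and assemble the general formula by factoring $(\alpha,s,\beta)$ through that index. The only differences are organizational: you prove the classification of maximal idempotents directly where the paper cites Proposition~3.2 of Gutik and Repov\v{s}, you get bijectivity of $\varphi$ at once from the permutation of maximal idempotents rather than via the paper's separate zero-divisor argument for injectivity, and you first build the whole family of local automorphisms $h_\alpha$ and transition units $c_{\alpha\beta}$ with the cocycle identity before normalizing at $\gamma_0$, whereas the paper fixes the base point from the outset and uses the single decomposition $(\gamma,s,\delta)=(\gamma,1_S,\alpha)(\alpha,s,\alpha)(\alpha,1_S,\delta)$, so the coherence bookkeeping you flagged as the ``main obstacle'' collapses into one computation.
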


\begin{proof}
A simple verification shows that $\sigma$ is an automorphism of the semigroup
$B_{\lambda}^0(S)$.

Let $\sigma\colon B_{\lambda}^0(S)\rightarrow B_{\lambda}^0(S)$ be an isomorphism. We fix an arbitrary $\alpha\in \lambda$.

Since $\sigma\colon
B_{\lambda}^0(S)\rightarrow B_{\lambda}^0(S)$ is the automorphism and the idempotent $(\alpha,1_S,\alpha)$ is maximal with the respect to the natural partial order on $E(B_{\lambda}^0(S))$, Proposition~3.2 of \cite{Gutik-Repovs-2010} implies that $((\alpha,1_S,\alpha))\sigma=(\alpha^{\,\prime},1_S,\alpha^{\,\prime})$ for some $\alpha^{\,\prime}\in \lambda$.

Since $(\beta,1_S,\alpha)(\alpha,1_S,\alpha)=(\beta,1_S,\alpha)$ for any $\beta\in \lambda$, we have that
\begin{equation*}
  ((\beta,1_S,\alpha))\sigma=
  ((\beta,1_S,\alpha))\sigma\cdot
  (\alpha^{\,\prime},1_S,\alpha^{\,\prime}),
\end{equation*}
and hence
\begin{equation*}
  ((\beta,1_S,\alpha))\sigma=
  ((\beta)\varphi,(\beta)u,\alpha^{\,\prime}),
\end{equation*}
for some $(\beta)\varphi\in\lambda$ and $(\beta)u\in S$.
Similarly, we get that
\begin{equation*}
  ((\alpha,1_S,\beta))\sigma=
  (\alpha^{\,\prime},(\beta)v,(\beta)\psi),
\end{equation*}
for some $(\beta)\psi\in\lambda$ and $(\beta)v\in S$. Since
$(\alpha,1_S,\beta)(\beta,1_S,\alpha)=(\alpha,1_S,\alpha)$, we
have that
\begin{equation*}
  (\alpha^{\,\prime},1_S,\alpha^{\,\prime}) = ((\alpha,1_S,\alpha))\sigma= (\alpha^{\,\prime},(\beta)v,(\beta)\psi)\cdot
    ((\beta)\varphi,(\beta)u,\alpha^{\,\prime})= (\alpha^{\,\prime},(\beta)v\cdot(\beta)u,\alpha^{\,\prime}),
\end{equation*}
and hence $(\beta)\varphi=(\beta)\psi=\beta^{\,\prime}\in \lambda$ and $(\beta)v\cdot(\beta)u=1_S$. Similarly, since
$(\beta,1_S,\alpha)\cdot(\alpha,1_S,\beta)=(\beta,1_S,\beta)$, we
see that the element
\begin{equation*}
 ((\beta,1_S,\beta))\sigma = ((\beta,1_S,\alpha)(\alpha,1_S,\beta))\sigma=
    (\beta^{\,\prime},(\beta)v\cdot(\beta)u,\beta^{\,\prime})
\end{equation*}
is a maximal idempotent of the subsemigroup $S_{\beta^{\,\prime},\beta^{\,\prime}}$ of $B_{\lambda}^0(S)$, and hence we have that $(\beta)v\cdot(\beta)u=1_S$. This implies that the elements $(\beta)v$ and $(\beta)u$ are mutually invertible in $H_1$, and hence $(\beta)v=((\beta)u)^{-1}$.

  If
$(\gamma)\varphi=(\delta)\varphi$ for $\gamma,\delta\in \lambda$ then
\begin{equation*}
  0\neq  (\alpha^{\,\prime},1_S,(\gamma)\varphi)\cdot     ((\delta)\varphi,1_S,\alpha^{\,\prime})=
  ((\alpha,1_S,\gamma))\sigma\cdot((\delta,1_S,\alpha))\sigma,
\end{equation*}
and since $\sigma$ is an automorphism, we have that
\begin{equation*}
(\alpha,1_S,\gamma)\cdot(\delta,1_S,\alpha)\neq 0
\end{equation*}
and hence $\gamma=\delta$. Thus $\varphi\colon \lambda\rightarrow\lambda$ is a bijective map.

Therefore for $s\in S\setminus\{0_S\}$ we have
\begin{equation*}
\begin{split}
((\gamma,s,\delta))\sigma=&\,
    ((\gamma,1_S,\alpha)\cdot(\alpha,s,\alpha)\cdot(\alpha,1_S,\delta))\sigma=\\
    =&\,((\gamma,1_S,\alpha))\sigma\cdot ((\alpha,s,\alpha))\sigma
    \cdot((\alpha,1_S,\delta))\sigma=\\
    =&\,((\gamma)\varphi,(\gamma)u,\alpha^{\,\prime})
    \cdot(\alpha^{\,\prime},(s)h,\alpha^{\,\prime})
    \cdot(\alpha^{\,\prime},((\delta)u)^{-1},(\delta)\varphi){=}\\
    =&\,((\gamma)\varphi,(\gamma)u\cdot(s)h\cdot((\delta)u)^{-1},(\delta)\varphi).
\end{split}
\end{equation*}
Also, since $0$ is zero of the semigroup $B_{\lambda}^0(S)$ we conclude that
$(0)\sigma=0$.
\end{proof}

Theorem~\ref{theorem-3.1} implies the following corollary:

\begin{corollary}\label{corollary-3.2}
Let $\lambda\geqslant 1$ be cardinal and let $B_{\lambda}(G)$ be the Brandt semigroup. Let $h\colon G\rightarrow G$ be an automorphism and suppose that $\varphi\colon \lambda\rightarrow \lambda$ is a bijective map. Let $u\colon \lambda\rightarrow G$ be a map. Then the map $\sigma\colon B_{\lambda}(G)\rightarrow
B_{\lambda}(G)$ defined by the formulae
\begin{equation*}
  ((\alpha,s,\beta))\sigma=  ((\alpha)\varphi,(\alpha)u\cdot(s)h\cdot((\beta)u)^{-1},(\beta)\varphi) \qquad \hbox{~and~} \qquad (0)\sigma=0,
\end{equation*}
is an automorphism of the Brandt semigroup $B_{\lambda}(G)$. Moreover, every automorphism of $B_{\lambda}(G)$ can be constructed in this manner.
\end{corollary}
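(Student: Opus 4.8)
The plan is to deduce the corollary from Theorem~\ref{theorem-3.1} by applying the latter to the monoid $S=G^0$. As noted in the introduction, every Brandt $\lambda$-extension of a group $G$ is isomorphic to the Brandt $\lambda^0$-extension $B_\lambda^0(G^0)$ of the group $G$ with an adjoined zero, so it suffices to describe the automorphisms of $B_\lambda^0(G^0)$. First I would verify that $G^0$ is indeed a monoid with zero to which Theorem~\ref{theorem-3.1} applies: its identity is $1_G$ (since $g\cdot 1_G=1_G\cdot g=g$ for every $g\in G$ and $0\cdot 1_G=0$), its zero is the adjoined element $0$, and an element $g\in G^0$ is invertible precisely when $g\in G$. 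Hence the group of units of $G^0$ is $H_1=G$, which is exactly the codomain of the map $u\colon\lambda\to G$ in the statement of the corollary.

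The second step establishes a bijective correspondence between $\mathbf{Aut}(G)$ and $\mathbf{Aut}(G^0)$. In one direction, the extension $f\mapsto\widehat{f}$ described at the beginning of Section~2 assigns to each automorphism $f$ of $G$ an automorphism $\widehat{f}$ of $G^0$ that fixes $0$ and agrees with $f$ on $G$. Conversely, any automorphism $g$ of $G^0$ must send the unique zero of $G^0$ to the zero of $G^0$, i.e. $(0)g=0$, and therefore restricts to a bijection of $(G^0)^*=G$ onto itself which, being multiplicative, is an automorphism of $G$. These two assignments are mutually inverse, so every automorphism $h$ of $G$ in the statement corresponds to a unique automorphism $\widehat{h}$ of $G^0$ with $(s)\widehat{h}=(s)h$ for $s\in G$ and $(0_S)\widehat{h}=0_S$.

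Finally, I would substitute $S=G^0$, $H_1=G$ and $\widehat{h}$ into formula~\eqref{eq-3.1} of Theorem~\ref{theorem-3.1}. For a non-zero $s\in G=(G^0)^*$ the term $(s)\widehat{h}$ equals $(s)h$, so~\eqref{eq-3.1} reduces to
\begin{equation*}
((\alpha,s,\beta))\sigma=((\alpha)\varphi,\,(\alpha)u\cdot(s)h\cdot((\beta)u)^{-1},\,(\beta)\varphi),
\end{equation*}
while $(0)\sigma=0$ is retained verbatim; these are exactly the formulae of the corollary. The completeness assertion --- that every automorphism of $B_\lambda(G)$ arises in this way --- transfers directly from the corresponding assertion in Theorem~\ref{theorem-3.1} through the isomorphism $B_\lambda(G)\cong B_\lambda^0(G^0)$ together with the correspondence $\mathbf{Aut}(G)\leftrightarrow\mathbf{Aut}(G^0)$. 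I expect the only point requiring care to be this last correspondence, namely confirming that no automorphism of $G^0$ can fail to fix the adjoined zero and that the restriction to $G$ is genuinely surjective; everything else is a routine specialization of Theorem~\ref{theorem-3.1}.
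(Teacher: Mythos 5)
Your proposal is correct and is essentially the paper's (implicit) argument: the paper gives no written proof, merely asserting that Theorem~\ref{theorem-3.1} implies the corollary, and the intended route is exactly your specialization to $S=G^{0}$ via the isomorphism $B_{\lambda}(G)\cong B_{\lambda}^{0}(G^{0})$ together with the identifications $H_{1}=G$ and $\mathbf{Aut}(G)\cong\mathbf{Aut}(G^{0})$, both of which the paper sets up in the introduction and at the start of Section~2.
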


Also, we observe that Corollary~\ref{corollary-3.2} implies the following well known statement:

\begin{corollary}\label{corollary-3.3}
Let $\lambda\geqslant 1$ be cardinal and $\varphi\colon \lambda\rightarrow \lambda$ a bijective map. Then the map $\sigma\colon B_{\lambda}\rightarrow
B_{\lambda}$ defined by the formulae
\begin{equation*}
    ((\alpha,\beta))\sigma=((\alpha)\varphi,(\beta)\varphi) \quad \hbox{and} \quad (0)\sigma=0,
\end{equation*}
is an automorphism of the semigroup of $\lambda{\times}\lambda$-matrix units $B_{\lambda}$. Moreover, every automorphism of $B_{\lambda}$ can be constructed in this manner.
\end{corollary}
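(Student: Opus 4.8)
The plan is to realize $B_{\lambda}$ as a special instance of the Brandt semigroup $B_{\lambda}(G)$ and then invoke Corollary~\ref{corollary-3.2}. First I would take $G$ to be the trivial (one-element) group $\{e\}$ and observe that the map sending $(\alpha,\beta)\mapsto(\alpha,e,\beta)$ and $0\mapsto 0$ is an isomorphism from $B_{\lambda}$ onto $B_{\lambda}(G)$. Indeed, the matrix-unit product $(\alpha,\beta)(\gamma,\delta)=(\alpha,\delta)$ when $\beta=\gamma$ and $0$ otherwise matches the Brandt product $(\alpha,e,\beta)(\gamma,e,\delta)=(\alpha,e\cdot e,\delta)$ under the same condition, so the two multiplications agree under this bijection. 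This allows me to transport the description of automorphisms verbatim.

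Next, I would apply Corollary~\ref{corollary-3.2} to $B_{\lambda}(G)$ for this trivial $G$. Every automorphism is then of the form $((\alpha,s,\beta))\sigma=((\alpha)\varphi,(\alpha)u\cdot(s)h\cdot((\beta)u)^{-1},(\beta)\varphi)$ for a bijection $\varphi\colon\lambda\to\lambda$, an automorphism $h$ of $G$, and a map $u\colon\lambda\to G$. The key point is that $|G|=1$ forces all of this structure to trivialize: the only automorphism $h$ of $G$ is the identity, the only map $u$ is the constant map with value $e$, and the only admissible element $s$ is $e$ itself. Consequently the middle coordinate collapses, since $(\alpha)u\cdot(s)h\cdot((\beta)u)^{-1}=e\cdot e\cdot e=e$, and the formula reduces to $((\alpha,e,\beta))\sigma=((\alpha)\varphi,e,(\beta)\varphi)$.

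Finally, I would translate back through the isomorphism of the first step: rewriting $(\alpha,e,\beta)$ as $(\alpha,\beta)$, the automorphism takes exactly the asserted shape $((\alpha,\beta))\sigma=((\alpha)\varphi,(\beta)\varphi)$ with $(0)\sigma=0$; conversely, any bijection $\varphi$ yields an automorphism by the forward direction of Corollary~\ref{corollary-3.2}. Because the reduction to Corollary~\ref{corollary-3.2} carries the entire argument, I expect no genuine obstacle here. The only step deserving explicit care is verifying that the vanishing of the freedom encoded by $h$ and $u$ is \emph{forced} by the triviality of $G$, so that precisely the index bijection $\varphi$ remains and no spurious automorphisms are introduced.
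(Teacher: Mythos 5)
Your proposal is correct and follows exactly the route the paper intends: the paper simply remarks that Corollary~\ref{corollary-3.2} implies this statement, and the specialization to the trivial group (which forces $h$ and $u$ to be trivial, leaving only the index bijection $\varphi$) is precisely that implication, spelled out. The identification of $B_{\lambda}$ with the Brandt extension of the trivial group with adjoined zero is already noted in the paper's preliminaries, so no gap remains.
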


The following example implies that the condition that semigroup $S$ contains the identity is essential.

\begin{example}\label{example-3.4}
Let $\lambda$ be any cardinal $\geqslant 2$. Let $S$ be the zero-semigroup of cardinality $\geqslant 3$ and $0_S$ is zero of $S$. It is easily to see that every bijective map $\sigma\colon B_{\lambda}^0(S)\rightarrow B_{\lambda}^0(S)$ such that $(0)\sigma=0$ is an automorphism of the Brandt $\lambda^0$-extension of $S$.
\end{example}

\noindent
\textbf{Remark.}
By Theorem~\ref{theorem-3.1} we have that every automorphism $\sigma\colon B_{\lambda}^0(S)\rightarrow B_{\lambda}^0(S)$ of the Brandt $\lambda^0$-extension of an arbitrary monoid $S$ with zero identifies with the ordered triple $[\varphi,h,u]$, where $h\colon S\rightarrow S$ is an automorphism of $S$, $\varphi\colon \lambda\rightarrow \lambda$ is a bijective map and $u\colon \lambda\rightarrow H_1$ is a map, where $H_1$ is the group of units of $S$.

\begin{lemma}\label{lemma-3.6}
Let $\lambda\geqslant 1$ be cardinal, $S$ be a monoid with zero and let $B_{\lambda}^0(S)$ be the Brandt $\lambda^0$-extension of $S$. Then the composition of arbitrary automorphisms $\sigma=[\varphi,h,u]$ and $\sigma^{\prime}=[\varphi^{\prime},h^{\prime},u^{\prime}]$ of the Brandt $\lambda^0$-extension of $S$ defines in the following way:
\begin{equation*}
    [\varphi,h,u]\cdot[\varphi^{\prime},h^{\prime},u^{\prime}]= [\varphi\varphi^{\prime},hh^{\prime},\varphi u^{\prime}\cdot uh^{\prime}].
\end{equation*}
\end{lemma}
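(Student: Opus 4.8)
The plan is to verify the formula by direct computation, evaluating the composite automorphism on a generic nonzero element and comparing the result with the canonical form~\eqref{eq-3.1} supplied by Theorem~\ref{theorem-3.1}. Since maps are written on the right, the product $\sigma\cdot\sigma'$ is the composite $x\mapsto((x)\sigma)\sigma'$; as both $\sigma$ and $\sigma'$ fix $0$, it suffices to track a triple $(\alpha,s,\beta)$ with $s\neq 0_S$ and note that $(0)(\sigma\sigma')=0$ automatically.

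First I would apply~\eqref{eq-3.1} for $\sigma=[\varphi,h,u]$ to obtain
\begin{equation*}
((\alpha,s,\beta))\sigma = ((\alpha)\varphi,\,(\alpha)u\cdot(s)h\cdot((\beta)u)^{-1},\,(\beta)\varphi),
\end{equation*}
and then substitute this triple into~\eqref{eq-3.1} for $\sigma'=[\varphi',h',u']$. The outer coordinates are immediate: they become $((\alpha)\varphi)\varphi'=(\alpha)(\varphi\varphi')$ and $((\beta)\varphi)\varphi'=(\beta)(\varphi\varphi')$, which is exactly the bijection $\varphi\varphi'$. The middle coordinate is where the work lies; applying $\sigma'$ produces
\begin{equation*}
((\alpha)\varphi)u'\cdot\big((\alpha)u\cdot(s)h\cdot((\beta)u)^{-1}\big)h'\cdot\big(((\beta)\varphi)u'\big)^{-1}.
\end{equation*}
Here I would use that a semigroup automorphism $h'$ of the monoid $S$ necessarily fixes $1_S$, hence restricts to a group automorphism of the group of units $H_1$ and commutes with inversion there; together with the homomorphism property this expands the central factor as $\big((\alpha)u\cdot(s)h\cdot((\beta)u)^{-1}\big)h' = ((\alpha)u)h'\cdot((s)h)h'\cdot\big(((\beta)u)h'\big)^{-1}$, with $((s)h)h'=(s)(hh')$.

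Regrouping the unit factors around the central factor $(s)(hh')$, the middle coordinate becomes
\begin{equation*}
\big(((\alpha)\varphi)u'\cdot((\alpha)u)h'\big)\cdot(s)(hh')\cdot\big(((\beta)\varphi)u'\cdot((\beta)u)h'\big)^{-1}.
\end{equation*}
Writing $w\colon\lambda\to H_1$ for the pointwise product $w=\varphi u'\cdot uh'$, that is $(\gamma)w=((\gamma)\varphi)u'\cdot((\gamma)u)h'$, this reads precisely as $(\alpha)w\cdot(s)(hh')\cdot((\beta)w)^{-1}$. Thus $((\alpha,s,\beta))(\sigma\sigma')$ agrees with~\eqref{eq-3.1} for the triple $[\varphi\varphi',\,hh',\,\varphi u'\cdot uh']$, which is the asserted formula.

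The only genuinely delicate point, and the main obstacle, is the bookkeeping in the middle coordinate together with the justification that the new map $w$ really takes values in $H_1$. This rests on the observation that $h'$ sends $1_S$ to $1_S$ and therefore carries units to units, so that $((\alpha)u)h'\in H_1$ and the identity $\big(((\beta)u)h'\big)^{-1}=\big(((\beta)u)^{-1}\big)h'$ is available. Everything else is a routine rearrangement of factors inside the group $H_1$.
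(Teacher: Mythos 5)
Your computation is correct and follows essentially the same route as the paper: apply $\sigma$ then $\sigma'$ to a generic triple via~\eqref{eq-3.1}, expand the middle coordinate using that the automorphism $h'$ preserves units and commutes with inversion in $H_1$, and regroup the factors to recognize the triple $[\varphi\varphi',hh',\varphi u'\cdot uh']$. Your added remark justifying that $\varphi u'\cdot uh'$ takes values in $H_1$ is a small but welcome clarification that the paper leaves implicit.
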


\begin{proof}
By Theorem~\ref{theorem-3.1} for every $(\alpha,s,\beta)\in B_{\lambda}^0(S)$ we have that
\begin{equation*}
\begin{split}
(\alpha,s,\beta)(\sigma\sigma^{\prime})& =  \left((\alpha)\varphi,(\alpha)u{\cdot}(s)h{\cdot}((\beta)u)^{-1},(\beta)\varphi\right) \sigma^{\prime}{=}\\
    & = \big(((\alpha)\varphi)\varphi^{\prime}, ((\alpha)\varphi)u^{\prime}\cdot \left((\alpha)u\cdot(s)h\cdot((\beta)u)^{-1}\right)h^{\prime} \cdot  \left(((\beta)\varphi)u^{\prime}\right)^{-1}, ((\beta)\varphi)\varphi^{\prime}\big)=
\end{split}
\end{equation*}
and since $h^{\prime}$ is an automorphism of the monoid $S$ we get that this is equal to
\begin{equation*}
\begin{split}
  =& \big(((\alpha)\varphi)\varphi^{\prime}, ((\alpha)\varphi)u^{\prime}\cdot
  \left((\alpha)u\right)h^{\prime}\cdot
  \left((s)h\right)h^{\prime} \cdot\left(((\beta)u)h^{\prime}\right)^{-1}\cdot \left(((\beta)\varphi)u^{\prime}\right)^{-1}, ((\beta)\varphi)\varphi^{\prime}\big)=\\
  =& \big((\alpha)(\varphi\varphi^{\prime}), (\alpha)(\varphi u^{\prime}\cdot
  uh^{\prime})\cdot
  \left((s)h\right)h^{\prime} \cdot (\beta)\left(\varphi u^{\prime}\cdot
  uh^{\prime}\right)^{-1}, (\beta)(\varphi\varphi^{\prime})\big).
\end{split}
\end{equation*}
This completes the proof of the requested equality.
\end{proof}

\begin{theorem}\label{theorem-3.7}
Let $\lambda\geqslant 1$ be cardinal, $S$ be a monoid with zero and let $B_{\lambda}^0(S)$ be the Brandt $\lambda^0$-extension of $S$. Then the group of automorphisms $\mathbf{Aut}(B_{\lambda}^0(S))$ of $B_{\lambda}^0(S)$ is isomorphic to a homomorphic image of the group defines on the Cartesian product $\mathscr{S}_{\lambda}\times \mathbf{Aut}(S)\times H_1^{\lambda}$ with the following binary operation:
\begin{equation}\label{eq-3.2}
    [\varphi,h,u]\cdot[\varphi^{\prime},h^{\prime},u^{\prime}]= [\varphi\varphi^{\prime},hh^{\prime},\varphi u^{\prime}\cdot uh^{\prime}],
\end{equation}
where $\mathscr{S}_{\lambda}$ is the group of all bijections of the cardinal $\lambda$, $\mathbf{Aut}(S)$ is the group of all automorphisms of the semigroup $S$ and $H_1^{\lambda}$ is the direct $\lambda$-power of the group of units $H_1$ of the monoid $S$. Moreover, the inverse element of $[\varphi,h,u]$ in the group $\mathbf{Aut}(B_{\lambda}^0(S))$ is defined by the formula:
\begin{equation*}
    [\varphi,h,u]^{-1}=\left[\varphi^{-1},h^{-1},\varphi^{-1}u^{-1}h^{-1}\right].
\end{equation*}
\end{theorem}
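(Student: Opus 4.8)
The plan is to prove the statement in two stages: first that the binary operation \eqref{eq-3.2} makes the Cartesian product $G:=\mathscr{S}_{\lambda}\times\mathbf{Aut}(S)\times H_1^{\lambda}$ a group, and then that the assignment of Theorem~\ref{theorem-3.1} yields a surjective homomorphism of $G$ onto $\mathbf{Aut}(B_{\lambda}^0(S))$. Throughout I read the composite symbols pointwise and left-to-right: $\varphi u'$ is precomposition, $(\alpha)(\varphi u')=((\alpha)\varphi)u'$; $uh'$ is postcomposition by the automorphism $h'$, $(\alpha)(uh')=((\alpha)u)h'$; and the dot denotes the pointwise product in $H_1$. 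Two facts drive every computation: precomposition by $\varphi$ distributes over pointwise products, $\varphi(w\cdot w')=\varphi w\cdot\varphi w'$, and postcomposition by $h'$ likewise distributes, $(w\cdot w')h'=wh'\cdot w'h'$, because an automorphism of the monoid $S$ fixes $1_S$ and restricts to a group automorphism of $H_1$. In particular the third coordinate $\varphi u'\cdot uh'$ again lies in $H_1^{\lambda}$, so $G$ is closed under \eqref{eq-3.2}.

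First I would verify the group axioms for $G$ directly. Associativity reduces, in the third coordinate, to expanding both $\big([\varphi,h,u][\varphi',h',u']\big)[\varphi'',h'',u'']$ and $[\varphi,h,u]\big([\varphi',h',u'][\varphi'',h'',u'']\big)$; using the two distributivity facts together with associativity of composition, both sides collapse to $\varphi\varphi'u''\cdot\varphi u'h''\cdot uh'h''$, so $\eqref{eq-3.2}$ is associative. The triple $[\mathrm{id}_{\lambda},\mathrm{id}_S,\mathbf{1}]$, where $\mathbf{1}$ is the constant map $\alpha\mapsto 1_S$, is a two-sided identity, since $\varphi\mathbf{1}=\mathbf{1}=\mathbf{1}h$ and $\mathrm{id}_{\lambda}\,u=u=u\,\mathrm{id}_S$. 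Finally I check the displayed inverse: writing $w=\varphi^{-1}u^{-1}h^{-1}$ with $u^{-1}$ the pointwise inverse in $H_1$, one gets $\varphi w=u^{-1}h^{-1}$, whence $\varphi w\cdot uh^{-1}=\mathbf{1}$, and $wh=\varphi^{-1}u^{-1}$, whence $\varphi^{-1}u\cdot wh=\mathbf{1}$; thus $[\varphi,h,u][\varphi^{-1},h^{-1},w]=[\varphi^{-1},h^{-1},w][\varphi,h,u]=[\mathrm{id}_{\lambda},\mathrm{id}_S,\mathbf{1}]$. This establishes the group structure and the inverse formula at once.

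Second, I would define $\Phi\colon G\to\mathbf{Aut}(B_{\lambda}^0(S))$ by sending $[\varphi,h,u]$ to the automorphism $\sigma$ given by \eqref{eq-3.1}. By Theorem~\ref{theorem-3.1} this map is well defined (each triple yields an automorphism) and surjective (every automorphism of $B_{\lambda}^0(S)$ arises this way). Lemma~\ref{lemma-3.6} is precisely the assertion that $\Phi$ carries the operation \eqref{eq-3.2} to composition of maps, that is, $\Phi\big([\varphi,h,u][\varphi',h',u']\big)=\Phi([\varphi,h,u])\,\Phi([\varphi',h',u'])$; hence $\Phi$ is a group homomorphism and $\mathbf{Aut}(B_{\lambda}^0(S))=\operatorname{Im}\Phi$ is a homomorphic image of $G$, as claimed.

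I expect the only genuine obstacle to be the notational bookkeeping in the associativity computation, together with the conceptual point that $\Phi$ need not be injective: distinct triples can induce the same automorphism (for example, altering $u$ by a central unit of $H_1$ while compensating in $h$ by the corresponding inner automorphism). For this reason one should claim only a homomorphic image and must \emph{not} try to transport associativity of \eqref{eq-3.2} back through $\Phi$, since the equality $\Phi\big((g*g')*g''\big)=\Phi\big(g*(g'*g'')\big)$ does not force $(g*g')*g''=g*(g'*g'')$ when $\Phi$ is not injective; hence the direct verification of the group axioms in $G$ is unavoidable.
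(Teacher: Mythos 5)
Your proof is correct and follows essentially the same route as the paper: verify the group axioms for $\mathscr{S}_{\lambda}\times\mathbf{Aut}(S)\times H_1^{\lambda}$ under the operation \eqref{eq-3.2} directly (associativity, the identity $[\mathrm{id}_\lambda,\mathrm{id}_S,\mathbf{1}]$, and the displayed inverse), and then use Theorem~\ref{theorem-3.1} for surjectivity and Lemma~\ref{lemma-3.6} for the homomorphism property of the map onto $\mathbf{Aut}(B_{\lambda}^0(S))$. The only difference is that the paper goes on to describe the kernel of this homomorphism explicitly (triples $[\mathrm{id},h,\widetilde{u}]$ with $\widetilde{u}$ constant and $(s)h=\widetilde{u}^{-1}s\widetilde{u}$), which the statement as given does not require; your observation that associativity must be checked in the product itself rather than pulled back through the possibly non-injective surjection is a sound point that the paper's own wording glosses over.
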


\begin{proof}
First, we show that the binary operation defined by formula (\ref{eq-3.2}) is associative. Let $[\varphi,h,u]$, $[\varphi^{\prime},h^{\prime},u^{\prime}]$ and $[\varphi^{\prime\prime},h^{\prime\prime},u^{\prime\prime}]$ be arbitrary elements of the Cartesian product $\mathscr{S}_{\lambda}\times \mathbf{Aut}(S)\times H_1^{\lambda}$. Then we have that
\begin{equation*}
\begin{split}
\big([\varphi,h,u]\cdot[\varphi^{\prime},h^{\prime},u^{\prime}]\big) \cdot [\varphi^{\prime\prime},h^{\prime\prime},u^{\prime\prime}]& =[\varphi\varphi^{\prime},hh^{\prime},\varphi u^{\prime}\cdot uh^{\prime}]\cdot [\varphi^{\prime\prime},h^{\prime\prime},u^{\prime\prime}]=\\
&= [\varphi\varphi^{\prime}\varphi^{\prime\prime},hh^{\prime}h^{\prime\prime}, \varphi\varphi^{\prime}u^{\prime\prime}\cdot (\varphi u^{\prime}\cdot uh^{\prime})h^{\prime\prime}]=\\
&= [\varphi\varphi^{\prime}\varphi^{\prime\prime},hh^{\prime}h^{\prime\prime}, \varphi\varphi^{\prime}u^{\prime\prime}\cdot \varphi u^{\prime}h^{\prime\prime}\cdot uh^{\prime}h^{\prime\prime}]
\end{split}
\end{equation*}
and
\begin{equation*}
\begin{split}
[\varphi,h,u]\cdot\left([\varphi^{\prime},h^{\prime},u^{\prime}] \cdot [\varphi^{\prime\prime},h^{\prime\prime},u^{\prime\prime}]\right)&=
 [\varphi,h,u]\cdot [\varphi^{\prime}\varphi^{\prime\prime},h^{\prime}h^{\prime\prime}, \varphi^{\prime} u^{\prime\prime}\cdot u^{\prime}h^{\prime\prime}]=\\
&= [\varphi\varphi^{\prime}\varphi^{\prime\prime},hh^{\prime}h^{\prime\prime}, \varphi(\varphi^{\prime}u^{\prime\prime}\cdot u^{\prime}h^{\prime\prime})\cdot uh^{\prime}h^{\prime\prime}]=\\
&= [\varphi\varphi^{\prime}\varphi^{\prime\prime},hh^{\prime}h^{\prime\prime}, \varphi\varphi^{\prime}u^{\prime\prime}\cdot \varphi u^{\prime}h^{\prime\prime}\cdot uh^{\prime}h^{\prime\prime}],
\end{split}
\end{equation*}
and hence so defined operation is associative.

Theorem~\ref{theorem-3.1} implies that formula (\ref{eq-3.1}) determines a map $\mathfrak{F}$ from the Cartesian product $\mathscr{S}_{\lambda}\times \mathbf{Aut}(S)\times H_1^{\lambda}$ onto the group of automorphisms $\mathbf{Aut}(B_{\lambda}^0(S))$ of the Brandt $\lambda^0$-extension $B_{\lambda}^0(S)$ of the monoid $S$, and hence the associativity of binary operation (\ref{eq-3.2}) implies that the map $\mathfrak{F}$ is a homomorphism from $\mathscr{S}_{\lambda}\times \mathbf{Aut}(S)\times H_1^{\lambda}$ onto the group $\mathbf{Aut}(B_{\lambda}^0(S))$.

Next we show that $[1_{\mathscr{S}_{\lambda}},1_{\mathbf{Aut}(S)},1_{H_1^{\lambda}}]$ is a unit element with the respect to the binary operation (\ref{eq-3.2}), where $1_{\mathscr{S}_{\lambda}}$, $1_{\mathbf{Aut}(S)}$ and $1_{H_1^{\lambda}}$ are units of the groups $\mathscr{S}_{\lambda}$, $\mathbf{Aut}(S)$ and $H_1^{\lambda}$, respectively. Then we have that
\begin{equation*}
\begin{split}
  [\varphi,h,u]\cdot \big[1_{\mathscr{S}_{\lambda}},1_{\mathbf{Aut}(S)},1_{H_1^{\lambda}}\big] &
     = \big[\varphi1_{\mathscr{S}_{\lambda}},h1_{\mathbf{Aut}(S)}, \varphi1_{H_1^{\lambda}}\cdot u1_{\mathbf{Aut}(S)}\big]=\\
    & = \big[\varphi,h,\varphi 1_{H_1^{\lambda}}\cdot u1_{\mathbf{Aut}(S)}\big]= \\
    & = \big[\varphi,h, 1_{H_1^{\lambda}}\cdot u\big]= \\
    & = [\varphi,h, u]
\end{split}
\end{equation*}
and
\begin{equation*}
  \big[1_{\mathscr{S}_{\lambda}},1_{\mathbf{Aut}(S)},1_{H_1^{\lambda}}\big]  \cdot[\varphi,h,u]=
  \big[1_{\mathscr{S}_{\lambda}}\varphi,1_{\mathbf{Aut}(S)}h, 1_{\mathscr{S}_{\lambda}}u\cdot 1_{H_1^{\lambda}}h\big]=
  [\varphi,h, u],
\end{equation*}
because every automorphism $h\in\mathbf{Aut}(S)$ acts on the group $H_1^{\lambda}$ by the natural way as a restriction of global automorphism of the semigroup $S$ on every factor, and hence we get that $1_{H_1^{\lambda}}h=1_{H_1^{\lambda}}$.

Also, similar arguments imply that
\begin{equation*}
\begin{split}
  [\varphi,h,u]\cdot[\varphi,h, u]^{-1} & =
  [\varphi,h,u]\cdot \left[\varphi^{-1},h^{-1},\varphi^{-1}u^{-1}h^{-1}\right]{=}\\
    & =
    \left[\varphi\varphi^{-1},hh^{-1},(\varphi\varphi^{-1})u^{-1}h^{-1}\cdot uh^{-1}\right]=\\
    & =
    \left[\varphi\varphi^{-1},hh^{-1},(1_{\mathscr{S}_{\lambda}})u^{-1}h^{-1}\cdot uh^{-1}\right]=\\
    & =
    \left[\varphi\varphi^{-1},hh^{-1},u^{-1}h^{-1}\cdot uh^{-1}\right]=\\
    & = \big[1_{\mathscr{S}_{\lambda}},1_{\mathbf{Aut}(S)},1_{H_1^{\lambda}}\big]
\end{split}
\end{equation*}
and
\begin{equation*}
\begin{split}
[\varphi,h, u]^{-1}\cdot [\varphi,h,u] &=
  \left[\varphi^{-1},h^{-1},\varphi^{-1}u^{-1}h^{-1}\right]\cdot[\varphi,h,u] {=}\\
   = & \left[\varphi^{-1}\varphi,h^{-1}h,\varphi^{-1}u\cdot \varphi^{-1}u^{-1}h^{-1}h\right]=\\
   = & \left[\varphi^{-1}\varphi,h^{-1}h,\varphi^{-1}u\cdot \varphi^{-1}u^{-1}\right]=\\
   = & \; \big[1_{\mathscr{S}_{\lambda}},1_{\mathbf{Aut}(S)},1_{H_1^{\lambda}}\big].
\end{split}
\end{equation*}
This implies that the elements $\left[\varphi^{-1},h^{-1},\varphi^{-1}u^{-1}h^{-1}\right]$ and $[\varphi,h,u]$ are invertible in $\mathscr{S}_{\lambda}\times \mathbf{Aut}(S)\times H_1^{\lambda}$, and hence the set $\mathscr{S}_{\lambda}\times \mathbf{Aut}(S)\times H_1^{\lambda}$ with the binary operation (\ref{eq-3.2}) is a group.

Let $\mathbf{Id}\colon B_{\lambda}^0(S)\rightarrow B_{\lambda}^0(S)$ be the identity automorphism of the semigroup $B_{\lambda}^0(S)$. Then by Theorem~\ref{theorem-3.1} there exist some automorphism $h\colon S\rightarrow S$, a bijective map  $\varphi\colon \lambda\rightarrow \lambda$ and a map $u\colon \lambda\rightarrow H_1$ into the group $H_1$ of units of $S$ such that
\begin{equation*}
  (\alpha,s,\beta)=(\alpha,s,\beta)\mathbf{Id}= ((\alpha)\varphi,(\alpha)u\cdot(s)h\cdot((\beta)u)^{-1},(\beta)\varphi),
\end{equation*}
for all $\alpha,\beta\in\lambda$ and $s\in S^*$. Since $\mathbf{Id}\colon B_{\lambda}^0(S)\rightarrow B_{\lambda}^0(S)$ is the identity automorphism we conclude that $(\alpha)\varphi=\alpha$ for every $\alpha\in\lambda$. Also, for every $s\in S^*$ we get that $s=(\alpha)u\cdot(s)h\cdot((\beta)u)^{-1}$ for all $\alpha,\beta\in\lambda$, and hence we obtain that
\begin{equation*}
1_S=(\alpha)u\cdot(1_S)h\cdot((\beta)u)^{-1}= (\alpha)u\cdot((\beta)u)^{-1}
\end{equation*}
for all $\alpha,\beta\in\lambda$. This implies that $(\alpha)u=(\beta)u=\widetilde{u}$ is a fixed element of the group $H_1$ for all $\alpha,\beta\in\lambda$.

We define
\begin{equation*}
  \ker N=  \; \Big\{[\varphi,h,\widetilde{u}]\in \mathscr{S}_{\lambda}\times \mathbf{Aut}(S)\times H_1^{\lambda}\colon
     \varphi\colon\lambda\rightarrow\lambda \hbox{~~is an idemtity map,~}\\
     \widetilde{u}(s)h\widetilde{u}\;^{-1}=s \hbox{~~for any~~} s\in S\Big\}.
\end{equation*}
It is obvious that the equality $\widetilde{u}(s)h\widetilde{u}^{-1}=s$ implies that $(s)h=\widetilde{u}^{-1}s\widetilde{u}$ for all $s\in S$. The previous arguments implies that $[\varphi,h,\widetilde{u}]\in\ker N$ if and only if $[\varphi,h,\widetilde{u}]\mathfrak{F}$ is the unit of the group $\mathbf{Aut}(B_{\lambda}^0(S))$, and hence $\ker N$ is a normal subgroup of $\mathscr{S}_{\lambda}\times \mathbf{Aut}(S)\times H_1^{\lambda}$. This implies that the quotient group $(\mathscr{S}_{\lambda}\times \mathbf{Aut}(S)\times H_1^{\lambda})/\ker N$ is isomorphic to the group $\mathbf{Aut}(B_{\lambda}^0(S))$.
\end{proof}

\end{document}